\def \ProbUnion{P\left(\bigcup_{i=1}^N A_i\right)}
\def \lYAT{\ell_{\textrm{NEW}}}
\def \lKAT{\ell_{\textrm{KAT}}}
\def \lOPT{\ell_{\textrm{OPT}}}
\def \st{\textrm{s.t. }}
\newtheorem{remark}{Remark}
\newtheorem{example}{Example}
\title{Lower Bounds on the Probability of a Finite Union of Events\thanks{Parts of this work were presented at the 2014 IEEE International Symposium on Information Theory (ISIT'14). This work was supported in part by NSERC of Canada.}} 
\author{Jun Yang\footnotemark[2] \and Fady Alajaji\footnotemark[3] \and Glen Takahara\footnotemark[3]}
\begin{document}
\maketitle

\renewcommand{\thefootnote}{\fnsymbol{footnote}}

\footnotetext[2]{Department of Statistical Sciences, University of Toronto, Toronto, ON M5S3G3, Canada. E-mail: jun@utstat.toronto.edu.}
\footnotetext[3]{Department of Mathematics and Statistics, Queen’s University, Kingston, ON K7L3N6, Canada. E-mails: \{fady,takahara\}@mast.queensu.ca.}

\renewcommand{\thefootnote}{\arabic{footnote}}

\begin{abstract}
	In this paper, lower bounds on the probability of a finite union of events are considered, \emph{i.e.} $P\left(\bigcup_{i=1}^N A_i\right)$, in terms of the individual event probabilities $\{P(A_i), i=1,\ldots,N\}$ and the \emph{sums} of the pairwise event probabilities, \emph{i.e.}, $\{\sum_{j:j\neq i} P(A_i\cap A_j), i=1,\ldots,N\}$. The contribution of this paper includes the following: (i) in the class of all lower bounds that are established in terms of only the $P(A_i)$'s and $\sum_{j:j\neq i} P(A_i\cap A_j)$'s, the \emph{optimal} lower bound is given numerically by solving a linear programming (LP) problem with $N^2-N+1$ variables; (ii) a new analytical lower bound is proposed based on a relaxed LP problem, which is at least as good as the bound due to Kuai, et al. \cite{Kuai2000}; (iii) numerical examples are provided to illustrate the performance of the bounds.
\end{abstract}

\begin{keywords}Probability of a finite Union of Events; Lower and Upper Bounds; Optimal Bounds; Linear Programming\end{keywords}

\begin{AMS}60C05; 90C05; 94B70; 65C50\end{AMS}

\pagestyle{myheadings}
\thispagestyle{plain}
\markboth{Jun Yang, Fady Alajaji and Glen Takahara}{Lower Bounds on the Probability of a Finite Union of Events}

\section{Introduction}

Lower and upper bounds of $P\left(\bigcup_{i=1}^N A_i\right)$ in terms of the individual event probabilities $P(A_i)$'s and the pairwise event probabilities $P(A_i\cap A_j)$'s can be seen as special cases of the Boolean probability bounding problem \cite{Boros2011,Vizvari2004}, which can be solved numerically via a linear programming (LP) problem involving $2^N$ variables. Unfortunately, the number of variables for Boolean probability bounding problems increases exponentially with the number of events, $N$, which makes finding the solution impractical. Therefore, some suboptimal numerical bounds are proposed \cite{Boros2011,Vizvari2004,Prekopa2005,GalambosBook} in order to reduce the complexity of the LP problem, for example, by using the dual basic feasible solutions.

On the other hand, analytical lower bounds are particularly important. The Kuai-Alajaji-Takahara (KAT) bound \cite{Kuai2000} is one of the analytical lower bounds that has been shown to be better than the Dawson-Sankoff (DS) bound \cite{Dawson1967} and D. de Caen's bound \cite{DeCaen1997}. The KAT bound is extended in \cite{Prekopa2005} using sums of joint probabilities of up to $m$ events, where $m < N$, such as $\{\sum_{j,l}P(A_i\cap A_j\cap A_l), i=1,\dots,N\}$. These analytical bounds are later investigated in other works (e.g., see \cite{Chen2006,Hoppe2006,Hoppe2009,Kuai2000a,Behnamfar2005,Behnamfar2007}).

As in \cite{DeCaen1997}, the KAT lower bound \cite{Kuai2000} for $P\left(\bigcup_{i=1}^N A_i\right)$ is expressed in terms of only $\sum_{j:j\neq i} P(A_i\cap A_j)$'s and $P(A_i)$'s, and hence knowledge of the individual pairwise event probabilities $P(A_i\cap A_j)$ is not required. In this paper, we revisit and investigate the same problem that lower bounds are established in terms of only the \emph{sums} of the pairwise event probabilities, \emph{i.e.}, $\sum_{j:j\neq i} P(A_i\cap A_j)$, and the individual event probabilities $P(A_i)$'s, without the use of the $P(A_i\cap A_j)$'s.

Our contributions are the following. First, in the class of all lower bounds that are expressed in terms of only the $P(A_i)$'s and the $\sum_{j:j\neq i} P(A_i\cap A_j)$'s, the \emph{optimal} lower bound is obtained numerically by solving an LP problem, which has only $N^2-N+1$ variables. Here \emph{optimality} means that any lower bound for $P\left(\bigcup_{i=1}^N A_i\right)$ in terms of only $\sum_{j:j\neq i} P(A_i\cap A_j)$'s and $P(A_i)$'s cannot be sharper than the proposed lower bound. This is proven by showing that the proposed lower bound can always be achieved by constructing $\{A_i, i=1,\ldots,N\}$ that satisfy all known information on the $\sum_{j:j\neq i} P(A_i\cap A_j)$'s and $P(A_i)$'s. The computational complexity of the optimal lower bound is significantly improved since the number of variables is quadratic in $N$ (as opposed to being exponential in $N$). Next, a suboptimal analytical lower bound is established by solving a relaxed LP problem. The new analytical bound is proven to be at least as good as the existing KAT bound \cite{Kuai2000}. Finally, we analyze the performance of the new bounds by comparing them with the KAT bound and other existing bounds. In particular, numerical results show that the Gallot-Kounias (GK) bound \cite{Gallot1966,Kounias1968}, which was recently revisited in \cite{Feng2010,Mao2013}, is not necessarily sharper than the proposed lower bounds as well as the KAT bound (see also \cite{Feng2013} for another example), even though it exploits full information of all $P(A_i\cap A_j)$'s and $P(A_i)$'s. Furthermore, the Pr{\'e}kopa-Gao (PG) bound \cite{Prekopa2005}, which extends the KAT bound by using the additional partial information $\{\sum_{j,l}P(A_i\cap A_j\cap A_l), i=1,\dots,N\}$, is not necessarily tighter than the derived lower bounds.

\section{Main Results}

Consider a finite family of events $A_1,\ldots,A_N$ in a general probability space $(\Omega ,\mathscr{F},P)$, where $N$ is a fixed positive integer. Note that there are only finitely many Boolean atoms\footnote{The problem can be directly reduced to the finite probability space case. Thus, through the numerical examples in this paper, we will consider finite probability spaces where $\omega\in\Omega$ denotes an elementary outcome instead of an atom.} specified by the $A_i$'s \cite{DeCaen1997}. For each atom $\omega\in\mathscr{F}$, let $p(\omega):= P(\omega)$, and let the degree of $\omega$, denoted by $\deg(\omega)$, be the number of $A_i$'s that contain $\omega$. Define
\begin{equation}\label{def_aik}
a_i(k):= P(\{\omega\subseteq A_i: \deg(\omega)=k\}),
\end{equation}
where $i=1,\ldots,N$ and $k=1,\ldots,N$. Then from \cite[Lemma 1]{Kuai2000}, we know that
\begin{equation}\label{}
P\left(\bigcup_{i=1}^N A_i\right)=\sum_{i=1}^N\sum_{k=1}^N\frac{a_i(k)}{k}.
\end{equation}
In this paper, using the same notation as in \cite{Kuai2000}, lower bounds on $P\left(\bigcup_{i=1}^N A_i\right)$ are established only in terms of $\alpha_i:= P(A_i)$ and $\beta_i:= \sum_{j: j\neq i} P(A_i\cap A_j)$, $i=1,\ldots,N$. For simplicity, we denote $\gamma_i:=\alpha_i+\beta_i$. Then it is easy to verify that the following equalities hold:
\begin{equation}\label{}
P(A_i)=\sum_{k=1}^N a_i(k)=\alpha_i,\quad \sum_{j} P(A_i\cap A_j)=\sum_{k=1}^N k a_i(k)= \gamma_i,\quad i=1,\ldots,N.
\end{equation}

Let $\mathscr{L}$ denote the set of all lower bounds that are established in terms of only $\{\alpha_i, i=1,\ldots,N\}$ and $\{\gamma_i, i=1,\ldots,N\}$. Then any lower bound in $\mathscr{L}$, say $\ell\in\mathscr{L}$, is a function of only $\{\alpha_i\}$'s and $\{\gamma_i\}$'s. Also, claiming that $\ell\in\mathscr{L}$ is a lower bound on $P\left(\bigcup_{i=1}^N A_i\right)$ means that for any events $\{A_i, i=1,\ldots,N\}$ that satisfy $P(A_i)=\alpha_i,i=1,\ldots,N$ and $\sum_{j} P(A_i\cap A_j)=\gamma_i, i=1,\ldots,N$, we must have $P\left(\bigcup_{i=1}^N A_i\right)\ge \ell$.


In order to distinguish the use of different partial information, we assume that a vector $\theta=(\theta_1,\dots,\theta_m)\in\mathbb{R}^m$ represents partial probabilistic information about the union $\bigcup_{i=1}^N A_i$. Specifically, we assume that for a given integer $m\ge 1$, $\Theta$ denotes the range of a function of $P(A_i)$'s and $P(A_i\cap A_j)$'s, $\eta_m:[0,1]^{N+\binom{N}{2}}\rightarrow\mathbb{R}^m$. Then $\theta$ equals to the value of the function $\eta_m$ for given $A_1,\dots,A_N$. Then, we can define a lower bound of $\ProbUnion$ as a function of $\theta$, $\ell(\theta)$, such that $\ProbUnion\ge\ell(\theta)$ for any set of events $\{A_i\}$ that the value of $\eta_m$ for given $\{A_i\}$ equals to $\theta$.

Next, we define an \emph{optimal} lower bound in a general class of lower bounds that are functions of $\theta$. Let $\mathscr{L}_{\Theta}$ denote the set of all lower bounds on $P\left(\bigcup_{i=1}^N A_i\right)$ that are functions of only $\theta$.



\smallskip

\begin{definition}
	We say that a lower bound $\ell ^{\star}\in\mathscr{L}_{\Theta}$ is
	\textit{optimal in $\mathscr{L}_{\Theta}$} if $\ell ^{\star}(\theta)\ge\ell (\theta )$
	for all $\theta\in\Theta$ and $\ell\in\mathscr{L}_{\Theta}$.
\end{definition}

\smallskip

\begin{definition}
	We say that a lower bound $\ell\in\mathscr{L}_{\Theta}$ is
	\textit{achievable} if for every $\theta\in\Theta$,
	$$\inf_{A_1,\ldots ,A_N}P\left(\bigcup_{i=1}^N A_i\right)=\ell (\theta ),$$
	where the infimum ranges over all collections $\{A_1,\ldots ,A_N\}$,
	$A_i\in\mathscr{F}$, such that $\{A_1,\ldots ,A_N\}$ is represented by
	$\theta$.
\end{definition}

For bounds in $\mathscr{L}_{\Theta}$, the following lemma shows that achievability is equivalent to optimality.

\smallskip

\begin{lemma}\label{optimality_lemma}
	A lower bound $\ell ^{\star}\in\mathscr{L}_{\Theta}$ is optimal
	in $\mathscr{L}_{\Theta}$ if and only if it is achievable.
\end{lemma}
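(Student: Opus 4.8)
The plan is to isolate one distinguished object, the pointwise greatest lower bound, show that it is \emph{simultaneously} a member of $\mathscr{L}_{\Theta}$ and an upper envelope for the whole class, and then read off both directions of the equivalence from this single observation. Concretely, I would define for each $\theta\in\Theta$ the function
$$g(\theta):=\inf_{\{A_1,\ldots,A_N\}\ \mathrm{rep.\ by}\ \theta}P\left(\bigcup_{i=1}^N A_i\right),$$
where the infimum ranges over all collections represented by $\theta$. The infimum is over a nonempty set precisely because $\Theta$ is the set of representations that actually arise from some collection, so $g$ is well defined on $\Theta$.

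The two facts I would establish about $g$ are: (a) $g\in\mathscr{L}_{\Theta}$, i.e. $g$ is a genuine lower bound that is a function of $\theta$ alone; and (b) $g(\theta)\ge\ell(\theta)$ for every $\ell\in\mathscr{L}_{\Theta}$ and every $\theta\in\Theta$. For (a), $g$ is by construction a function of $\theta$ only, and since the infimum lies below each element of the set it is taken over, $g(\theta)\le P\!\left(\bigcup_{i=1}^N A_i\right)$ for every collection represented by $\theta$; hence $g$ is a valid lower bound. For (b), fix $\ell\in\mathscr{L}_{\Theta}$ and $\theta$: validity of $\ell$ means $\ell(\theta)\le P\!\left(\bigcup_{i=1}^N A_i\right)$ for every collection represented by $\theta$, so $\ell(\theta)$ is a lower bound for the set over which the infimum defining $g(\theta)$ is taken, whence $\ell(\theta)\le g(\theta)$. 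Thus $g$ is itself the greatest element of $\mathscr{L}_{\Theta}$, and by its very definition it is achievable.

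With $g$ in hand, both implications are short. For achievable $\Rightarrow$ optimal, if $\ell^{\star}$ is achievable then $\ell^{\star}(\theta)=g(\theta)$ for all $\theta$, and since $g$ dominates every member of $\mathscr{L}_{\Theta}$ by (b), so does $\ell^{\star}$, i.e. $\ell^{\star}$ is optimal. For optimal $\Rightarrow$ achievable, suppose $\ell^{\star}$ is optimal. As a member of $\mathscr{L}_{\Theta}$ it satisfies $\ell^{\star}(\theta)\le g(\theta)$ by (b) applied to $\ell=\ell^{\star}$; on the other hand, applying optimality to the particular bound $g\in\mathscr{L}_{\Theta}$ gives $\ell^{\star}(\theta)\ge g(\theta)$. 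The two inequalities force $\ell^{\star}(\theta)=g(\theta)$ for every $\theta$, which is exactly achievability. I expect no genuinely hard step here; the real \emph{crux}—and the only point needing care—is the recognition that the infimum function $g$ is not an external benchmark but actually lies inside $\mathscr{L}_{\Theta}$ itself, since this is what allows the universally quantified optimality condition to be tested against the single distinguished bound $g$. The only side condition to check is the mild nonemptiness needed for $g$ to be defined, which the definition of $\Theta$ supplies.
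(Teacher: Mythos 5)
Your proof is correct, and its organization differs genuinely from the paper's. Both arguments ultimately rest on the same fact---that functions built from $\inf P\left(\bigcup_{i=1}^N A_i\right)$ (infimum over collections represented by $\theta$) are themselves valid members of $\mathscr{L}_{\Theta}$---but you exploit this globally while the paper exploits it locally. You introduce the envelope $g(\theta):=\inf P\left(\bigcup_{i=1}^N A_i\right)$, verify that $g\in\mathscr{L}_{\Theta}$, that $g$ dominates every member of $\mathscr{L}_{\Theta}$, and that $g$ is achievable by construction; both implications then follow by sandwiching $\ell^{\star}$ against the single distinguished bound $g$, with no contrapositive and no $\epsilon$. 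The paper instead proves achievable $\Rightarrow$ optimal by an $\epsilon$-argument (essentially your fact (b) in disguise), and proves optimal $\Rightarrow$ achievable contrapositively by constructing a ``spike'' bound equal to some $c$ strictly between $\ell^{\star}(\theta')$ and the infimum at the one point $\theta'$ where achievability fails, and equal to $0$ elsewhere. What your route buys: it is more structural, and it shows in passing that $\mathscr{L}_{\Theta}$ possesses a greatest element, namely $g$, so the optimal bound exists and is unique (any optimal bound must coincide with $g$)---a statement the lemma itself does not even assert. What the paper's route buys: the spike construction isolates exactly where optimality fails and only requires the trivial bound $0$ away from $\theta'$, so it never needs to treat the full envelope $g$ as an object in its own right. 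Your identification of the crux---that $g$ lies \emph{inside} $\mathscr{L}_{\Theta}$, so the universally quantified optimality condition can be tested against it---is exactly right; the paper's spike function plays the same role in localized form.
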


\smallskip

\begin{proof}
	Suppose that $\ell ^{\star}$ is achievable. Let $\theta\in\Theta$
	and $\epsilon >0$ be given, and let $\ell$ be any lower bound in
	$\mathscr{L}_{\Theta}$. By achievability there exist sets $A_1,\ldots ,A_N$
	in $\mathscr{F}$ represented by $\theta$ such that
	\[
	\ell ^{\star}(\theta )>P\left(\bigcup_{i=1}^N A_i\right)-\epsilon \ge\ell (\theta )-\epsilon .
	\]
	Since this holds for any $\epsilon$ we have
	$\ell ^{\star}(\theta )\ge\ell (\theta )$. We prove the converse by the
	contrapositive. Suppose that $\ell ^{\star}$ is not achievable. Then there
	exists $\theta '\in\Theta$ such that
	\[
	\inf_{A_1,\ldots ,A_N}P\left(\bigcup_{i=1}^N A_i\right)>\ell ^{\star}(\theta '),
	\]
	where the infimum ranges over all collections $\{A_1,\ldots ,A_N\}$,
	$A_i\in\mathscr{F}$, such that $\{A_1,\ldots ,A_N\}$ is represented by
	$\theta '$. Define $\ell$ by
	\[
	\ell (\theta ) =\left\{\begin{array}{cl}
	c & \mbox{if $\theta =\theta '$} \\
	0 & \mbox{if $\theta\ne\theta '$,}
	\end{array}\right.
	\]
	where $c$ satisfies
	\[
	\inf_{A_1,\ldots ,A_N}P\left(\bigcup_{i=1}^N A_i\right)>c>\ell ^{\star}(\theta ').
	\]
	Then $\ell\in\mathscr{L}_{\Theta}$ and is larger than $\ell ^{\star}$ at
	$\theta '$. Hence, $\ell ^{\star}$ is not optimal.
\end{proof}

Clearly, in our problem, we have $\theta=(\alpha_1,\ldots,\alpha_N, \gamma_1,\ldots,\gamma_N)$ and $\mathscr{L}_{\Theta}=\mathscr{L}$.
We herein state the following lemma regarding the existing KAT bound.
\begin{lemma}[KAT Bound~\cite{Kuai2000}]\label{lemma_KAT}
	The solution of the following LP problem
	\begin{equation}\label{LP_KAT}
	\begin{split}
	\min_{\{a_i(k),i=1,\ldots,N,k=1,\ldots,N\}}&\quad\sum_{i=1}^N\sum_{k=1}^N\frac{a_i(k)}{k}\\
	\st&\quad\sum_{k=1}^N a_i(k)=\alpha_i,\quad \sum_{k=1}^N k a_i(k)= \gamma_i,\quad i=1,\ldots,N,\\
	&\quad a_i(k)\ge 0,\quad i=1,\ldots,N,\quad k=1,\ldots,N,
	\end{split}
	\end{equation}
	gives the KAT bound:
	\begin{equation}\label{KAT_bound}
	P\left(\bigcup_{i=1}^N A_i\right)\ge \sum_{i=1}^N\quad \left\{ \left[\frac{1}{\lfloor\frac{\gamma_i}{\alpha_i}\rfloor}-\frac{\frac{\gamma_i}{\alpha_i}-\lfloor\frac{\gamma_i}{\alpha_i}\rfloor}{(1+\lfloor\frac{\gamma_i}{\alpha_i}\rfloor)(\lfloor\frac{\gamma_i}{\alpha_i}\rfloor)}\right]\alpha_i\right\},
	\end{equation}
	where $\lfloor x\rfloor$ is the largest positive integer less than or equal to $x$.
\end{lemma}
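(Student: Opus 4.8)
The plan is to exploit the separable structure of the LP in (\ref{LP_KAT}). Since the objective $\sum_{i,k} a_i(k)/k$ is a sum over $i$ and the two equality constraints couple only those variables $\{a_i(k)\}_{k=1}^N$ sharing a common index $i$, the LP decouples into $N$ independent subproblems, one for each $i$. It therefore suffices to solve, for each fixed $i$ with $\alpha_i>0$ (the degenerate case $\alpha_i=0$ forces $a_i(k)\equiv 0$ and contributes nothing to the objective), the subproblem of minimizing $\sum_{k=1}^N a_i(k)/k$ subject to $\sum_k a_i(k)=\alpha_i$, $\sum_k k\,a_i(k)=\gamma_i$, and $a_i(k)\ge 0$.

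Next I would normalize each subproblem. Writing $w_k:=a_i(k)/\alpha_i$, the constraints state exactly that $\{w_k\}$ is a probability distribution on $\{1,\ldots,N\}$ with mean $r_i:=\gamma_i/\alpha_i$, while the objective becomes $\alpha_i\sum_k w_k/k=\alpha_i\,\E[1/k]$. Thus the subproblem reduces to the moment problem of minimizing the expected value of $1/k$ over all distributions on $\{1,\ldots,N\}$ with a prescribed mean $r_i$. Since every atom of $A_i$ has degree between $1$ and $N$, one has $\alpha_i\le\gamma_i\le N\alpha_i$, so $r_i\in[1,N]$ and the subproblem is feasible.

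The key step is a convexity argument. Let $L:[1,N]\to\mathbb{R}$ be the piecewise-linear interpolant of $g(k):=1/k$ through the integer points $(k,1/k)$, $k=1,\ldots,N$. Because $g$ is convex, its interpolant $L$ is convex and satisfies $L(k)=g(k)$ at every integer $k$. Jensen's inequality then yields
\[
\E[1/k]=\E[L(k)]\ge L\bigl(\E[k]\bigr)=L(r_i),
\]
with equality attained by placing all mass on the two adjacent integers $\lfloor r_i\rfloor$ and $\lfloor r_i\rfloor+1$ (a single point when $r_i$ is an integer), since such a distribution is supported on one linear piece of $L$. Evaluating the interpolant on $[\lfloor r_i\rfloor,\lfloor r_i\rfloor+1]$ gives
\[
L(r_i)=\frac{1}{\lfloor r_i\rfloor}-\frac{r_i-\lfloor r_i\rfloor}{\lfloor r_i\rfloor\,(1+\lfloor r_i\rfloor)}.
\]
Multiplying by $\alpha_i$, substituting $r_i=\gamma_i/\alpha_i$, and summing over $i$ reproduces the claimed bound (\ref{KAT_bound}).

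The main obstacle will be the rigorous justification that the two-point distribution on adjacent integers is optimal; the Jensen argument above settles this cleanly, provided one is careful that the identity $g(k)=L(k)$ holds exactly at the support points. As an alternative certificate, one can invoke LP duality: each subproblem carries only two equality constraints, so every basic feasible solution has at most two nonzero $a_i(k)$, and exhibiting dual multipliers for the active pair $\{\lfloor r_i\rfloor,\lfloor r_i\rfloor+1\}$ satisfying complementary slackness confirms optimality. Checking the feasibility range $\alpha_i\le\gamma_i\le N\alpha_i$ is a routine preliminary.
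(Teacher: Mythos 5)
Your proof is correct, but it takes a genuinely different route from the paper, whose ``proof'' of this lemma is simply a citation to \cite{Kuai2000}. The derivation behind that citation (recapped later in the paper's remark containing equation (\ref{Eqn_V_i})) works with the algebraic identity
$V_i=\frac{2}{r}\alpha_i-\frac{1}{r(r-1)}\beta_i+\sum_{k}\frac{(r-k)(r-k-1)}{r(r-1)}\frac{a_i(k)}{k}$,
observes that the remainder sum is non-negative for every integer $r$, and then maximizes the resulting family of bounds over $r$, the optimum being $r=1+\lfloor\gamma_i/\alpha_i\rfloor$; relatedly, when this paper itself solves LPs of this type (Theorem \ref{theorem2} and \ref{appen_a}), it argues via vertices: two equality constraints force at most two nonzero variables $a_i(k_1),a_i(k_2)$, and monotonicity in $k_1,k_2$ pins down the optimal pair. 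Your argument instead normalizes each decoupled subproblem into a moment problem --- minimize $\E[1/K]$ over distributions on $\{1,\ldots,N\}$ with prescribed mean $r_i$ --- and applies Jensen's inequality to the convex piecewise-linear interpolant $L$ of $k\mapsto 1/k$, with equality from the two-point distribution on $\{\lfloor r_i\rfloor,\lfloor r_i\rfloor+1\}$. Both are rigorous; yours is arguably cleaner and more conceptual, as it exposes the convexity of $1/k$ as the single reason the bound holds and identifies the optimal support without case analysis, while the KAT-style identity buys a whole family of valid bounds indexed by integer $r$ (useful in the paper's later remark for strengthening the bound by retaining the $a_i(N)$ term). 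Two small points you should make explicit to fully match the lemma's statement: first, that the true $a_i(k)$'s defined in (\ref{def_aik}) are feasible for (\ref{LP_KAT}) and give objective value $P\left(\bigcup_{i=1}^N A_i\right)$ (this is the paper's equation preceding the lemma), so the LP optimum is indeed a lower bound on the union probability; second, your Jensen step silently uses that $K$ is integer-valued so that $\E[1/K]=\E[L(K)]$ exactly --- you do flag this, and it is correct, but it is the one place where the interpolant construction could be misused if the support were not $\{1,\ldots,N\}$.
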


\smallskip

Denoting $\lKAT$ as the KAT bound in (\ref{KAT_bound}), we can see that the KAT bound is a lower bound which is established in terms of only $\{\alpha_i\}$'s and $\{\gamma_i\}$'s. Thus, $\lKAT\in\mathscr{L}$. One should note that for a given family of events $\{A_i, i=1,\ldots,N\}$, the $a_i(k)$'s can be obtained from their definition in (\ref{def_aik}). However, this does not mean that for each feasible point $\{a_i(k)\}$ of the LP problem (\ref{LP_KAT}), there exists a corresponding family of events $\{A_i, i=1,\ldots,N\}$ . In particular, for the solution of (\ref{LP_KAT}), it is possible that a family of events $\{A_i, i=1,\ldots,N\}$ can never be constructed; this is illustrated in the following example.

\medskip

\begin{example}\label{example1}
	Considering a finite probability space (where atoms $\omega$ are reduced to elementary outcomes), shown as System V in Table \ref{table_sysV}, we have $$N=3, \alpha_1=0.1, \alpha_2=\alpha_3=0.2, \gamma_1=0.21, \gamma_2=\gamma_3=0.265.$$  The KAT solution $\{a_i(k)\}$ for $\lKAT=0.3833$ is obtained only at the following optimal feasible point of (\ref{LP_KAT}):
	\begin{equation*}
	\begin{split}
	&a_1(1)=0, a_1(2)=0.09, a_1(3)=0.01, a_2(1)=a_3(1)=0.135, \\
	&a_2(2)=a_3(2)=0.065, a_2(3)=a_3(3)=0.
	\end{split}
	\end{equation*}
	However, $a_1(3):= P(\{\omega\in A_1: \deg(\omega)=3\})=0.01$ implies $P(A_1\cap A_2\cap A_3)\ge 0.01$, since $\deg(\omega)=3$ means that the corresponding outcome $\omega$ must be contained in all $A_i,i=1,\ldots,3$. However, $a_2(3):= P(\{\omega\in A_2: \deg(\omega)=3\})=0$ implies such $\omega$ is not in $A_2$, which is a contradiction. Therefore, there is no family of events $\{A_1,A_2,A_3\}$ that can be constructed for this system such that $P(A_1\cup A_2\cup A_3)=0.3833$. In other words, for any sets $\{A_1,A_2,A_3\}$ with given value of $\{\alpha_i\}$'s and $\{\gamma_i\}$'s, we must have $P(A_1\cup A_2\cup A_3)>0.3833$.
	\begin{table}\caption{System V.}\label{table_sysV}
		\centering
		\begin{tabular}{|c|c||c|c|c|}
			\hline
			Outcomes $\omega_i$ & $p(\omega_i)$ & $A_1$ & $A_2$ & $A_3$\\ \hline\hline
			$\omega_0$ & 0.145 &  &  & $\times$ \\ \hline
			$\omega_1$ & 0.045 & $\times$ &  & $\times$ \\ \hline
			$\omega_2$ & 0.01 & $\times$ & $\times$ & $\times$ \\ \hline
			$\omega_3$ & 0.045 & $\times$ & $\times$ &  \\ \hline
			$\omega_4$ & 0.145 &  & $\times$ & \\ \hline
		\end{tabular}
	\end{table}
\end{example}

\medskip

\begin{remark}\label{remark_KAT_unique_solution}
	It can be shown that the LP problem (\ref{LP_KAT}) has a unique optimal feasible point. Therefore, the KAT bound is achievable if and only if the optimal feasible point of the LP problem (\ref{LP_KAT}) has a corresponding family of events $\{A_i, i=1,\cdots,N\}$ that satisfies the information represented by $\theta=(\alpha_1,\cdots,\alpha_N,\gamma_1,\cdots,\gamma_N)$. From Example~\ref{example1}, we see that $\lKAT$ is not optimal in $\mathscr{L}$.
\end{remark}

\bigskip
\subsection{Optimal Numerical Lower Bound}
In order to get a better lower bound than the KAT bound, we herein introduce more constraints on the $a_i(k)$'s in (\ref{LP_KAT}) so that the feasible set of $a_i(k)$'s becomes smaller, thus resulting in a sharper lower bound. By Lemma~\ref{optimality_lemma}, if a family of events $\{A_i\}$ can always be constructed for any feasible point of the resulting LP problem, then the solution must be the optimal lower bound. We establish the numerically computable optimal lower bound in the following theorem.

\smallskip

\begin{theorem}[Optimal Numerical Lower Bound]\label{theorem1}
	The \emph{optimal} lower bound in $\mathscr{L}$ is given by solving the following LP problem:
	\begin{equation}\label{LP_optimal}
	\begin{split}
	\min_{\{a_i(k),i=1,\ldots,N,k=1,\ldots,N\}}&\quad\sum_{i=1}^N\sum_{k=1}^N\frac{a_i(k)}{k}\\
	\st&\quad\sum_{k=1}^N a_i(k)=\alpha_i,\quad \sum_{k=1}^N k a_i(k)= \gamma_i,\quad i=1,\ldots,N,\\
	&\quad \sum_{i=1}^N a_i(k)\ge k a_j(k),\quad j=1,\ldots,N,\quad k=1,\ldots,N,\\
	&\quad a_i(k)\ge 0,\quad i=1,\ldots,N,\quad k=1,\ldots,N,
	\end{split}
	\end{equation}
	where the number of variables can be reduced to $N^2-N+1$.
\end{theorem}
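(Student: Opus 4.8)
The plan is to invoke Lemma~\ref{optimality_lemma}: since a bound in $\mathscr{L}$ is optimal if and only if it is achievable, it suffices to prove that the value of the LP (\ref{LP_optimal}) is achievable. This splits into two tasks. First, I must check that the LP value is genuinely a lower bound, i.e.\ that every admissible family of events induces a \emph{feasible} point of (\ref{LP_optimal}); this shows the LP value lies in $\mathscr{L}$. Second, and conversely, I must show that \emph{every} feasible point of (\ref{LP_optimal}) is realized by an explicit family of events attaining $P\left(\bigcup_{i=1}^N A_i\right)=\sum_{i,k}a_i(k)/k$, so that the infimum in Definition~1 equals the LP minimum.

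First I would verify that the new constraints are necessary. For any family $\{A_i\}$, let $D_k$ be the set of atoms of degree $k$ and put $S(k):=P(D_k)$. Since each degree-$k$ atom lies in exactly $k$ of the sets, summing the definition (\ref{def_aik}) over $i$ gives $\sum_{i=1}^N a_i(k)=k\,S(k)$, while $a_j(k)=P(A_j\cap D_k)\le P(D_k)=S(k)$; combining these yields $k\,a_j(k)\le\sum_{i=1}^N a_i(k)$, which is exactly the added constraint. Together with the equalities recorded before the theorem, this shows the $a_i(k)$'s of any admissible family form a feasible point of (\ref{LP_optimal}), whence $P\left(\bigcup_{i=1}^N A_i\right)=\sum_{i,k}a_i(k)/k$ is at least the LP minimum.

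The substance is the converse construction. Take any feasible point $\{a_i(k)\}$ and fix a degree $k$ with $S(k):=\frac{1}{k}\sum_i a_i(k)>0$ (degrees with $S(k)=0$ force all $a_i(k)=0$ and can be ignored). The constraints give $0\le a_i(k)\le S(k)$ and $\sum_i a_i(k)=k\,S(k)$, so the normalized vector $\big(a_1(k)/S(k),\ldots,a_N(k)/S(k)\big)$ lies in the hypersimplex $\{x\in[0,1]^N:\sum_i x_i=k\}$. I would then use that the vertices of this polytope are exactly the $0/1$ indicators of the $k$-element subsets $T\subseteq\{1,\ldots,N\}$, so the normalized vector is a convex combination $\sum_{|T|=k}\lambda_{k,T}\,\mathbf{1}_T$. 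Setting $w_k(T):=S(k)\,\lambda_{k,T}$ and creating, for each such $T$, an atom of probability $w_k(T)$ placed in $A_i$ precisely when $i\in T$, I obtain a finite probability space in which $P(A_i\cap D_k)=\sum_{T\ni i}w_k(T)=a_i(k)$. A short check then confirms $P(A_i)=\sum_k a_i(k)=\alpha_i$, $\sum_j P(A_i\cap A_j)=\sum_k k\,a_i(k)=\gamma_i$, and $P\left(\bigcup_{i=1}^N A_i\right)=\sum_k\sum_T w_k(T)=\sum_k S(k)=\sum_{i,k}a_i(k)/k$. Applying this to the \emph{optimal} feasible point realizes the LP value exactly, establishing achievability and hence, by Lemma~\ref{optimality_lemma}, optimality.

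The main obstacle is this combinatorial realization step: the assertion that the added constraint $k\,a_j(k)\le\sum_i a_i(k)$ is \emph{precisely} what allows the degree-$k$ mass to be split among $k$-subsets with the prescribed per-set marginals. It rests on the hypersimplex vertex characterization, which admits a short direct argument: at a vertex of $\{x\in[0,1]^N:\sum_i x_i=k\}$ at least $N-1$ box constraints are active, pinning $N-1$ coordinates to $0$ or $1$, whereupon the single equality forces the remaining coordinate to be an integer in $[0,1]$, hence also $0$ or $1$; so every vertex is the indicator of a $k$-subset and every point is a convex combination of such indicators. I expect the careful bookkeeping of this decomposition, rather than the probabilistic verification, to require the most attention. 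It is worth noting that this is exactly the step that fails for the KAT LP (\ref{LP_KAT}): absent the new constraint, a feasible point such as the one in Example~\ref{example1} may admit no realizing family, which is precisely why $\lKAT$ is not optimal in $\mathscr{L}$.
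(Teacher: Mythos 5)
Your overall architecture matches the paper's: verify that the new constraints are necessary (so the LP value is a genuine bound in $\mathscr{L}$), prove achievability by constructing a realizing family of events from a feasible point, and invoke Lemma~\ref{optimality_lemma}. However, your construction in the achievability step is genuinely different from the paper's. The paper builds, for each degree $k$, a circle of perimeter $\sum_i a_i(k)/k$ and wraps arcs of lengths $a_1(k),\ldots,a_N(k)$ consecutively around it; since the total arc length is exactly $k$ times the perimeter and no single arc exceeds the perimeter, every point of the circle is covered exactly $k$ times, and the (at most $N$) division points cut the circle into atoms with the prescribed degree-$k$ masses. Your route instead normalizes $\big(a_1(k),\ldots,a_N(k)\big)/S(k)$ and observes that the added constraint $k\,a_j(k)\le\sum_i a_i(k)$ is precisely membership in the hypersimplex $\{x\in[0,1]^N:\sum_i x_i=k\}$, whose vertices are the indicators of $k$-subsets; the convex decomposition then hands you the atom weights directly. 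Your approach is conceptually cleaner, in that it identifies the new constraint as exactly the condition for decomposability of the degree-$k$ mass over $k$-subsets, and your vertex argument (one equality plus $N-1$ active box constraints forces integrality of the last coordinate) is correct. The paper's circle construction, by contrast, is more explicitly algorithmic and produces at most $N$ atoms per degree without any appeal to convex decomposition, whereas your decomposition may a priori involve up to $\binom{N}{k}$ subsets (though finiteness is all that is needed, and Carath\'eodory would cap it at $N$).

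One repair is needed: your claim that \emph{every} feasible point of (\ref{LP_optimal}) is realized by a family of events is too strong. A feasible point with $\sum_{i,k}a_i(k)/k>1$ cannot be realized, since your atoms would then have total mass exceeding $1$. The paper is careful on this point: it first notes that the optimal feasible point necessarily satisfies $\sum_{i,k}a_i(k)/k\le 1$ (the LP minimum is at most $P\left(\bigcup_{i=1}^N A_i\right)\le 1$ for any family realizing $\theta$, and such a family exists because $\theta\in\Theta$), and it proves achievability only for feasible points obeying this additional inequality. Your construction needs the same qualification, together with a residual atom of mass $1-\sum_k S(k)$ placed outside all the $A_i$'s so that the weights form a genuine probability space. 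With that caveat inserted, your argument is complete and yields the theorem exactly as the paper's does.
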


\smallskip

\begin{proof}
	Denote the optimal lower bound in $\mathscr{L}$ by $\lOPT$ then $\lOPT\in\mathscr{L}$ satisfies $\lOPT\ge \ell$ for all $\ell\in\mathscr{L}$. Let the solution of (\ref{LP_optimal}) be $\lOPT'$, we will show that $\lOPT'=\lOPT$. First, it is easy to prove that for any $\{a_i(k)\}$ obtained by (\ref{def_aik}) from a family of events $\{A_i\}$, the additional constraints $\sum_{i=1}^N a_i(k)\ge k a_j(k)$ must hold for each $j=1,\ldots,N$ and $k=1,\ldots,N$. Therefore, $\lOPT'$ is a lower bound on $P\left(\bigcup_{i=1}^N A_i\right)$. Furthermore, since $\lOPT'$ is established in terms of only $\{\alpha_i\}$'s and $\{\gamma_i\}$'s, we have $\lOPT'\in\mathscr{L}$. Thus, we only need to prove $\lOPT'\ge \ell'$ for all $\ell'\in\mathscr{L}$. Also, note that for the solution of (\ref{LP_optimal}), since $\lOPT'\le P\left(\bigcup_{i=1}^N A_i\right)\le 1$, the objective value must be no larger than $1$, i.e., $\sum_{i=1}^N\sum_{k=1}^N\frac{a_i(k)}{k}\le 1$. Thus, the optimal feasible point of (\ref{LP_optimal}) must fall into the subset of the feasible set of (\ref{LP_optimal}), which is determined by the additional constraint $\sum_{i=1}^N\sum_{k=1}^N\frac{a_i(k)}{k}\le 1$. In the following, we prove that $\lOPT'$ is achievable, i.e., a family of events $\{A_i\}$ can always be constructed from the solution of (\ref{LP_optimal}). Then, the optimality of $\lOPT'$ follows by Lemma \ref{optimality_lemma}.
	
	\emph{Achievability:} We prove that for any $\{a_i(k)\}$ that satisfies the constraints of (\ref{LP_optimal}) and the additional constraint $\sum_{i=1}^N\sum_{k=1}^N\frac{a_i(k)}{k}\le 1$, it is always possible to construct a family of events $\{A_i\}$ such that $P(\{\omega\subseteq A_i: \deg(\omega)=k\})=a_i(k)$ holds.
	The construction method is given as follows:
	\begin{itemize}
		\item The set $\Omega'$ is composed of $N\times N$ atoms, denoted as $\{\omega_i^{(k)}, i=1,\ldots,N, k=1,\ldots,N\}$. In the following, $\{\omega_i^{(k)}, i=1,\ldots,N\}$ are constructed separately for each $k$.
		\item Consider $N$ circles such that the $k$-th circle has a perimeter equals to $\sum_{i=1}^N \frac{a_i(k)}{k}$, $k=1,\ldots,N$. Then for the $k$-th circle, $\sum_{i=1}^N a_i(k)$ equals $k$ times its perimeter. Furthermore, since $a_j(k)\le\sum_{i=1}^N \frac{a_i(k)}{k}$ for all $j$, $a_j(k)$ is no larger than the perimeter of the $k$-th circle.
		\item For $j=1,\ldots,N$, we map the points on the arc of length $a_j(k)$ on the $k$-th circle from $2\pi\frac{k\sum_{l=1}^{j-1} a_l(k)}{\sum_{i=1}^N a_i(k)}$   to $2\pi\frac{k\sum_{l=1}^{j} a_l(k)}{\sum_{i=1}^N a_i(k)}$ to a set $B_j^{(k)}$. Then since for the $k$-th circle, $\sum_{i=1}^N a_i(k)$ equals to $k$ times its perimeter and $a_j(k)$ is no larger than its perimeter, it follows that every point on the $k$-th circle is mapped to exactly $k$ distinct sets in $\{B_1^{(k)},\ldots,B_N^{(k)}\}$.
		\item On the $k$-th circle, the points at the following $N$ angles,
		$$2\pi\left(\frac{k\sum_{l=1}^{j} a_l(k)}{\sum_{i=1}^N a_i(k)}-\left\lfloor \frac{k\sum_{l=1}^{j} a_l(k)}{\sum_{i=1}^N a_i(k)} \right\rfloor\right),\quad j=1,\ldots,N$$ divide the circle into (at most) $N$ arcs, and the points on each arc are mapped to the same $k$ sets in $\{B_1^{(k)},\ldots,B_N^{(k)}\}$. Let $\{\theta_j^{(k)}, j=1,\ldots,N\}$ be the ordered tuple of $$\left\{2\pi\left(\frac{k\sum_{l=1}^{j} a_l(k)}{\sum_{i=1}^N a_i(k)}-\left\lfloor \frac{k\sum_{l=1}^{j} a_l(k)}{\sum_{i=1}^N a_i(k)} \right\rfloor\right), j=1,\ldots,N\right\},$$then $0= \theta_1^{(k)}\le \theta_2^{(k)}\le\ldots\le\theta_N^{(k)}\le 2\pi$. Construct the atom $\omega_j^{(k)}$ such that its probability $p(\omega_j^{(k)}$) equals to the length of the $j$-th arc of the $k$-th circle, i.e.,
		\begin{equation}\label{}
		p(\omega_j^{(k)})=\left\{ \begin{array}{ll}
		(\theta_{j+1}^{(k)}-\theta_{j}^{(k)})\frac{\sum_{i=1}^N a_i(k)}{2\pi k} & \textrm{for $j<N$,}\\
		(2\pi-\theta_{N}^{(k)})\frac{\sum_{i=1}^N a_i(k)}{2\pi k} & \textrm{for $j=N$.}
		\end{array} \right.
		\end{equation}
		\item Since the points on the $j$-th arc are mapped to $k$ sets $\{B_{i_{1j}}^{(k)},\ldots,B_{i_{kj}}^{(k)}\}$ where $\{i_{1j},\ldots,i_{kj}\}\in\{1,\ldots,N\}$ contains $k$ different numbers, we let the atom $\omega_j^{(k)}$ be a subset of $A_{i_{1j}},\ldots,A_{i_{kj}}$, respectively, i.e., $\omega_j^{(k)}\subseteq A_{i_{1j}}\cap\ldots\cap A_{i_{kj}}$.
		\item For each $k$, the total probability of all constructed atoms equals to the perimeter of the circle, $\sum_{i=1}^N\frac{a_i(k)}{k}$. Also, each atom $\omega_j^{(k)}$ is contains in exactly $k$ events of  $A_1,\ldots,A_N$.
		Finally, since there are in total $N\times N$ atoms $\{\omega_j^{(k)}, j=1,\ldots,N, k=1,\ldots,N\}$, each constructed $A_i$ contains a finite number of atoms.
	\end{itemize}
	With the construction described above, it can be readily checked that the constructed $\{A_i\}$ satisfy $P(\{\omega\subseteq A_i: \deg(\omega)=k\})=a_i(k)$ for all $i=1,\ldots,N$. Since $\lOPT'$ is achieved at one feasible point of (\ref{LP_optimal}), by the proposed construction method a family of events, say $\{A_i^*\}$, can be constructed so that $P\left(\bigcup_{i=1}^N A_i^*\right)=\lOPT'$. Since the first two constraints of (\ref{LP_optimal}) are also satisfied, we have $P(A_i^*)=\alpha_i$ and $\sum_{j}P(A_i^*\cap A_j^*)=\gamma_i$ for all $i$.
	
	Therefore, the optimality of $\lOPT'$ directly follows by Lemma \ref{optimality_lemma}.
	Finally, the number of variables in bound~(\ref{LP_optimal}) can be reduced from $N^2$ to $N^2-N+1$ by observing that $a_1(N)=a_2(N)=\ldots=a_N(N)$. 
\end{proof}

\medskip

\begin{example} We give an example in a finite probability space to illustrate the construction provided in the achievability part of the above proof for $N=4$ and $k=2$. Assume that
	$a_1(k)=0.1$, $a_2(k)=0.2$, $a_3(k)=0.3$, and $a_4(k)=0.4$ for $k=2$. Since $a_j(k)\le\sum_{i=1}^4 \frac{a_i(k)}{k}=0.5$ hold for $j=1,\ldots,4$, the given $a_j(k)$'s satisfy the constraints in (\ref{LP_optimal}) for $k=2$.
	\begin{itemize}
		\item In order to construct the outcomes, we assume there is a circle with perimeter equals to $\sum_{i=1}^4 \frac{a_i(k)}{k}=0.5$. Then we map the arc $(0,0.4\pi]$ to $B_1^{(2)}$, $(0.4\pi,1.2\pi]$ to $B_2^{(2)}$, $(1.2\pi,2.4\pi]$ to $B_3^{(2)}$, and $(2.4\pi,4\pi]$ to $B_4^{(2)}$, as shown in Fig. \ref{fig_example}. Then every arc generates an angle less than $2\pi$ and every point on the circle is mapped to exactly two sets in $\{B_1^{(2)},B_2^{(2)},B_3^{(2)},B_4^{(2)}\}$. That is: the arc $(0,0.4\pi]$ is mapped to $B_1^{(2)}$ and $B_3^{(2)}$; the arc $(0.4\pi,1.2\pi]$ is mapped to $B_2^{(2)}$ and $B_4^{(2)}$; the arc $(1.2\pi, 2\pi]$ is mapped to $B_3^{(2)}$ and $B_4^{(2)}$.
		\item Since the ordered tuple of the angles $\{0.4\pi, 1.2\pi, 2\pi(1.2-1), 2\pi(2-2)\}$ is $\{0,0.4\pi,0.4\pi,1.2\pi\}$, the circle is divided by $N=4$ arcs with lengths equal to $\{0.1, 0, 0.2, 0.2\}$, respectively.
		\item The outcomes $\omega_1^{(2)}$, $\omega_2^{(2)}$, $\omega_3^{(2)}$ and $\omega_4^{(2)}$ are constructed with probabilities equal to the length of the arcs, i.e., $p(\omega_1^{(2)})=0.1$, $p(\omega_2^{(2)})=0$, $p(\omega_3^{(2)})=0.2$, $p(\omega_3^{(2)})=0.2$. Finally, we set the outcomes belonging to events $A_i$'s as follows: $\omega_1^{(2)}\in A_1\cap A_3$, $\omega_3^{(2)}\in A_2\cap A_4$ and $\omega_4^{(2)}\in A_3\cap A_4$. After the construction for $k=2$ only, the events of $\{A_i\}$ become: $A_1=\{\omega_1^{(2)}\}$, $A_2=\{\omega_3^{(2)}\}$, $A_3=\{\omega_1^{(2)},\omega_4^{(2)}\}$, $A_4=\{\omega_3^{(2)},\omega_4^{(2)}\}$. Thus, $P(\{\omega\in A_i: \deg(\omega)=k\})=a_i(k)$ is satisfied for $i=1,\ldots,4$ and $k=2$.
	\end{itemize}
	\begin{figure}
		\centering\includegraphics[width=1\textwidth]{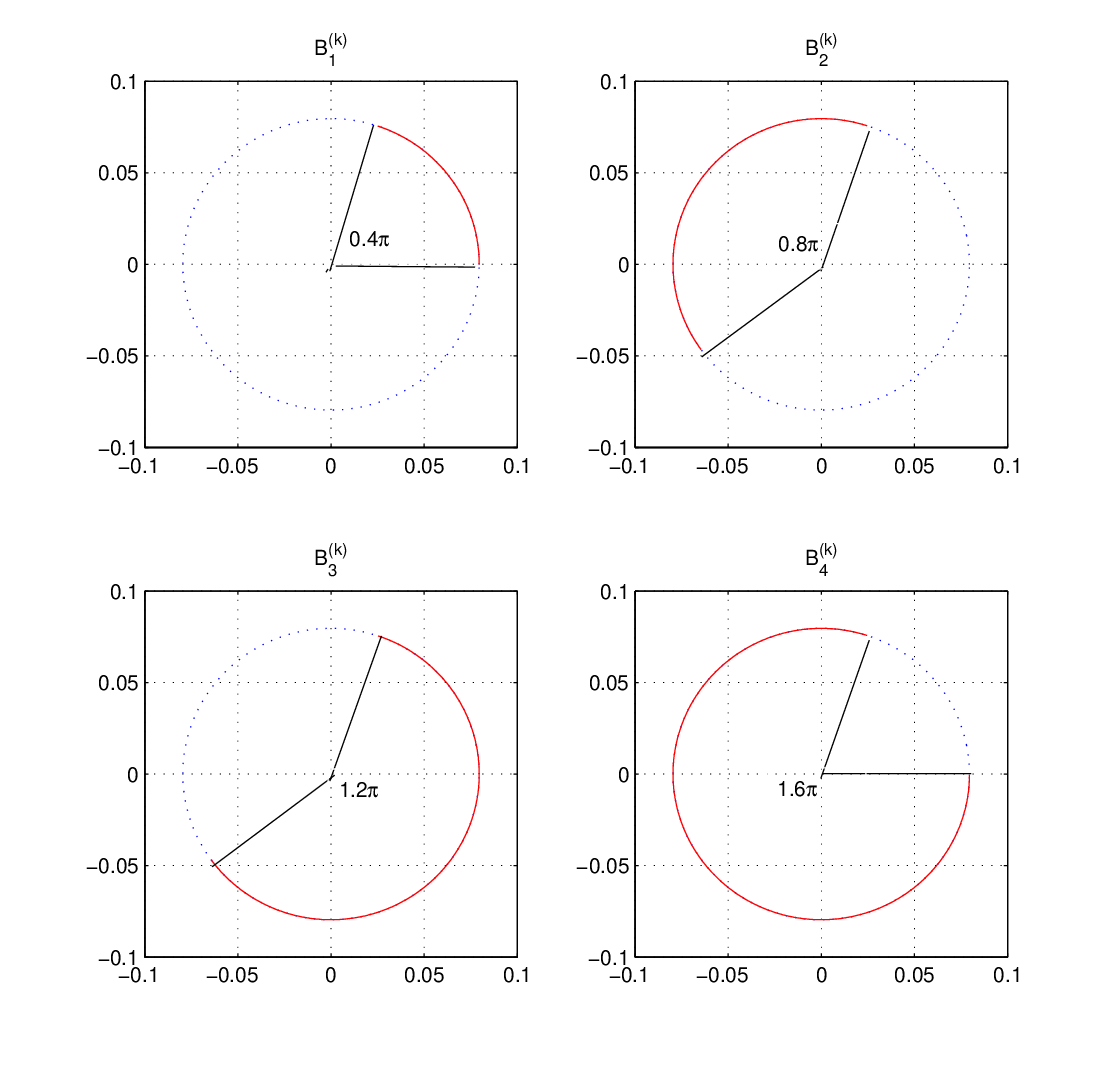}
		\caption{Example illustrating the construction of the proof of Theorem \ref{theorem1} for $N=4$ and $k=2$.}\label{fig_example}
	\end{figure}
\end{example}

\medskip

\begin{remark}
	The existing DS bound \cite{Dawson1967} is known to be optimal in the class of lower bounds with the information $\theta=(S_1=\sum_{i=1}^N P(A_i), S_2=\sum_{i,j} P(A_i\cap A_j))$ (e.g., see \cite[p.~22]{GalambosBook}). Using Lemma \ref{optimality_lemma}, we can provide a different proof of the optimality of the DS bound \cite{Dawson1967}. Specifically, we can show that the DS bound is the solution of the following LP problem:
	\begin{equation}\label{LP_dawson}
	\begin{split}
	\min_{\{a_i(k)\}}&\quad\sum_{i=1}^N\sum_{k=1}^N\frac{a_i(k)}{k}\\
	\st&\quad\sum_{i=1}^N\sum_{k=1}^N a_i(k)=S_1,\quad \sum_{i=1}^N\sum_{k=1}^N k a_i(k)=S_2,\\
	&\quad \sum_{i=1}^N a_i(k)\ge k a_j(k),\quad j=1,\ldots,N,\quad k=1,\ldots,N,\\
	&\quad a_i(k)\ge 0,\quad i=1,\ldots,N,\quad k=1,\ldots,N.
	\end{split}
	\end{equation}
	The last two constraints in (\ref{LP_dawson}) together with $\sum_{i=1}^N\sum_{k=1}^N\frac{a_i(k)}{k}\le 1$ guarantee the achievability of the solution of (\ref{LP_dawson}). Thus, by Lemma \ref{optimality_lemma}, the solution of (\ref{LP_dawson}) is the optimal lower bound.
\end{remark}

\bigskip
\subsection{New Analytical Lower Bound}
We herein derive a new analytical lower bound in $\mathscr{L}$, 
which is given in the following theorem.

\smallskip

\begin{theorem}[New Analytical Bound]\label{theorem2}
	The lower bound is given by
	\begin{equation}\label{YA_bound}
	P\left(\bigcup_{i=1}^N A_i\right)\ge \lYAT:=\delta+\sum_{i=1}^N\quad \left\{ \left[\frac{1}{\chi(\frac{\gamma_i'}{\alpha_i'})}-\frac{\frac{\gamma_i'}{\alpha_i'}-\chi(\frac{\gamma_i'}{\alpha_i'})}{[1+\chi(\frac{\gamma_i'}{\alpha_i'})][\chi(\frac{\gamma_i'}{\alpha_i'})]}\right]\alpha_i'\right\},
	\end{equation}
	where the function $\chi(\cdot)$ is defined by
	\begin{equation}\label{definition_chi}
	\chi(x) = \left\{ \begin{array}{ll}
	n-1 & \textrm{if $x=n$ where $n\ge 2$ is a integer}\\
	\lfloor x\rfloor & \textrm{otherwise}
	\end{array} \right.
	\end{equation}
	and
	\begin{equation}\label{}
	\delta:=\left\{\max_i\left[\gamma_i-(N-1)\alpha_i\right]\right\}^+\ge 0, \quad \alpha_i':= \alpha_i-\delta,\quad \gamma_i':= \gamma_i-N \delta.
	\end{equation}
	
\end{theorem}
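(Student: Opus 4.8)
The plan is to prove the inequality $P\!\left(\bigcup_{i=1}^N A_i\right)\ge\lYAT$ by exhibiting $\lYAT$ as the optimal value of a \emph{relaxation} of the optimal LP (\ref{LP_optimal}). Concretely, I would form the problem $\mathcal{R}$ obtained from (\ref{LP_optimal}) by discarding the coupling constraints $\sum_{i=1}^N a_i(k)\ge k a_j(k)$ for every $k<N$ and retaining them only at $k=N$. Because $\mathcal{R}$ has fewer constraints than (\ref{LP_optimal}), its optimal value is no larger than $\lOPT$, which by Theorem \ref{theorem1} is itself $\le P\!\left(\bigcup_{i=1}^N A_i\right)$; hence $\mathrm{val}(\mathcal{R})$ is automatically a valid lower bound. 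At the same time $\mathcal{R}$ has strictly more constraints than the KAT LP (\ref{LP_KAT}), so its feasible set is contained in that of (\ref{LP_KAT}) and therefore $\mathrm{val}(\mathcal{R})\ge\lKAT$. Thus it suffices to show $\mathrm{val}(\mathcal{R})=\lYAT$, which simultaneously gives validity and the comparison with the KAT bound.

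First I would reduce $\mathcal{R}$. Exactly as noted for (\ref{LP_optimal}), the $k=N$ coupling constraints force $a_1(N)=\cdots=a_N(N)=:\delta_0$ for a common $\delta_0\ge 0$. Substituting this, the objective separates as $\delta_0+\sum_{i=1}^N h_i(\delta_0)$, where $h_i(\delta_0)$ is the value of the per-index two-moment LP that minimizes $\sum_{k=1}^{N-1} a_i(k)/k$ subject to $\sum_{k=1}^{N-1} a_i(k)=\alpha_i':=\alpha_i-\delta_0$ and $\sum_{k=1}^{N-1} k\,a_i(k)=\gamma_i':=\gamma_i-N\delta_0$, with $a_i(k)\ge 0$. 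Feasibility of each residual problem requires its average degree $r_i':=\gamma_i'/\alpha_i'$ to satisfy $1\le r_i'\le N-1$; since $r_i'$ is nonincreasing in $\delta_0$, the feasible set of $\delta_0$ is an interval whose \emph{left} endpoint comes from the upper constraints $r_i'\le N-1$. Writing these out for all $i$ gives exactly $\delta_0\ge\max_i[\gamma_i-(N-1)\alpha_i]$, i.e. $\delta_0\ge\delta$.

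The heart of the argument is to show that the minimizing $\delta_0$ is this smallest feasible value $\delta_0=\delta$. I would note that $g(\delta_0):=\delta_0+\sum_i h_i(\delta_0)$ is convex in $\delta_0$: each $h_i$ is the optimal value of an LP viewed as a function of its right-hand side, hence convex, and $\delta_0\mapsto(\alpha_i',\gamma_i')$ is affine. By convexity it is enough to check that the right derivative of $g$ at $\delta_0=\delta$ is nonnegative. On a region where the optimal support of $h_i$ is \{m_i,m_i+1\} one finds $h_i$ affine in $\delta_0$ with slope $\phi(m_i)$, where $\phi(m):=\frac{N-1-2m}{m(m+1)}$, so $g'=1+\sum_i\phi(m_i)$. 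The key elementary inequality is $\phi(m)\ge-\tfrac1N$ for every integer $m\in$ \{1,\dots,N-1\}, the minimum being attained at $m=N-1$; since there are $N$ indices this yields $g'\ge 1-N\cdot\tfrac1N=0$. Hence $g$ is nondecreasing on $[\delta,\infty)$ and its minimum over the feasible range is attained at $\delta_0=\delta$. Verifying this monotonicity — in particular the bound $\min_m\phi(m)=-1/N$ together with the convexity that reduces the check to the left endpoint — is the step I expect to be the main obstacle.

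It then remains to evaluate $g(\delta)$. At $\delta_0=\delta$ each residual two-moment LP is solved by a distribution on two consecutive admissible degrees: for $r_i'=\gamma_i'/\alpha_i'$ the optimal support is \{\chi(r_i'),\chi(r_i')+1\}, and here the convention $\chi$ of (\ref{definition_chi}), rather than $\lfloor\cdot\rfloor$, is precisely what keeps the upper support point at most $N-1$ when $r_i'$ equals the integer $N-1$ (which occurs for the index attaining the maximum defining $\delta$), while returning the same minimal value. This gives $h_i(\delta)=\left[\frac{1}{\chi(r_i')}-\frac{r_i'-\chi(r_i')}{(1+\chi(r_i'))\chi(r_i')}\right]\alpha_i'$, and adding the degree-$N$ contribution $\delta$ produces $g(\delta)=\lYAT$. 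Therefore $\mathrm{val}(\mathcal{R})=\lYAT$, which is a lower bound on $P\!\left(\bigcup_{i=1}^N A_i\right)$ and is at least $\lKAT$, as claimed.
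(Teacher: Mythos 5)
Your proposal is correct, and its skeleton is the paper's own: the same relaxed LP (problem (\ref{LP_analytical}) in the paper), the same observation that the $k=N$ coupling constraints force $a_1(N)=\cdots=a_N(N)$, the same separation into per-index two-moment LPs over degrees $1,\ldots,N-1$ with feasible interval $[\delta,\min_i\beta_i/(N-1)]$ for the common variable, and the same evaluation at the left endpoint using the convention (\ref{definition_chi}). The genuine difference is how the minimization over the common variable is dispatched. The paper proves each $f_i$ is nondecreasing by direct computation: continuity across the integer breakpoints is verified in \ref{appen_b} and \ref{appen_c}, and the piecewise derivative is computed in (\ref{differential_f_of_x}). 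You instead invoke convexity of an LP optimal value as a function of its right-hand side and reduce everything to one right-derivative check at $\delta_0=\delta$. The arithmetic core is identical --- your key inequality $\phi(m)\ge -1/N$ \emph{is} (\ref{differential_f_of_x}), since $\phi(m)+\frac{1}{N}=\frac{(N-m)(N-m-1)}{Nm(m+1)}$ --- but the convexity route is cleaner: it makes the breakpoint analysis of \ref{appen_c} unnecessary, and it also hands you, for free, the convexity of $f_i$ that the paper re-derives by hand in the proof of Lemma \ref{lemma_gap}. Two points to make explicit if you write this up: first, the LP value function is not merely convex but polyhedral (a finite maximum of affine functions of the right-hand side, by duality), which supplies the continuity at the endpoint $\delta_0=\delta$ that your right-derivative computation tacitly uses; second, the claim that each per-index optimum is supported on two consecutive degrees $\{\chi(r_i'),\chi(r_i')+1\}$ is asserted rather than proved --- it needs the vertex argument together with the monotonicity in \ref{appen_a}, or an explicit appeal to \cite{Kuai2000}, which is how the paper closes that step.
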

\begin{proof}
	The new lower bound is the solution of the following relaxed LP:
	\begin{equation}\label{LP_analytical}
	\begin{split}
	\min_{\{a_i(k),i=1,\ldots,N,k=1,\ldots,N\}}&\quad\sum_{i=1}^N\sum_{k=1}^N\frac{a_i(k)}{k}\\
	\st&\quad\sum_{k=1}^N a_i(k)=\alpha_i,\quad \sum_{k=1}^N k a_i(k)= \gamma_i,\quad i=1,\ldots,N,\\
	&\quad \sum_{i=1}^N a_i(N)\ge N a_j(N),\quad j=1,\ldots,N\\
	&\quad a_i(k)\ge 0,\quad i=1,\ldots,N,\quad k=1,\ldots,N.
	\end{split}
	\end{equation}
	Note that the above problem is a relaxed problem of (\ref{LP_optimal}) because the constraints $\sum_{i=1}^N a_i(k)\ge k a_j(k), j=1,\ldots,N$ for all $k\neq N$ in (\ref{LP_optimal}) are relaxed. Comparing with the LP problem of (\ref{LP_KAT}) that corresponds to the KAT bound, the additional constraints are only $\sum_{i=1}^N a_i(N)\ge N a_j(N), j=1,\ldots,N$, which can be easily proved to be equivalent to requiring that $a_1(N)=a_2(N)=\ldots=a_N(N)$. We first introduce a new non-negative variable $x:= a_1(N)=\ldots=a_N(N)$, and solve the problem (\ref{LP_analytical}) by assuming that $x$ is known. Then the objective function in  (\ref{LP_analytical}) becomes a function of $x$. Finally, we minimize the objective function to yield a solution of (\ref{LP_analytical}).
	
	Replacing $a_i(N), i=1,\ldots,N$ in (\ref{LP_analytical}) by $x$ and assuming that $x$ is given implies that (\ref{LP_analytical}) can be solved separately for each $i, i=1,\ldots,N$, by solving the following $N$ problems:
	\begin{equation}\label{separate_problem}
	\begin{split}
	f_i(x):=\min_{a_i(k), k=1,\ldots,N-1}&\quad\sum_{k=1}^{N-1} \frac{a_i(k)}{k}+ \frac{x}{N}\\
	\st&\quad \sum_{k=1}^{N-1}a_i(k)=\alpha_i-x\\
	&\quad \sum_{k=1}^{N-1} k a_i(k)=\gamma_i-Nx\\
	&\quad a_i(k)\ge 0, k=1,\ldots, N-1.
	\end{split}
	\end{equation}
	Note that when $x$ is given the above problem is equivalent to (\ref{LP_KAT}), the solution of which was derived in different ways in \cite{Kuai2000,KuaiThesis}. However, since $x$ is a variable which is assumed to be fixed at the current stage, the solution of problem (\ref{separate_problem}) may not exist for any given $x$. Thus, one needs to investigate the condition for the existence of a solution for (\ref{separate_problem}) when solving it. To this end, we solve the problem (\ref{separate_problem}) by taking into account the feasible set for $x$.
	
	Since the LP problem (\ref{separate_problem}) has $N-1$ variables and the LP optimum must be achieved at one of vertices of the polyhedron formed by the constraints \cite{bertsimas-LPbook}, the $N-3$ of the $N-1$ constraints $a_i(k)\ge 0, k=1,\ldots,N-1$ must be active. Assume that the other two constraints $a_i(k)\ge 0$ that are not active are given for $k=k_1$ and $k=k_2$ and $1\le k_1< k_2\le N-1$, then we obtain
	\begin{equation}\label{}
	a_i(k_1)+a_i(k_2)=\alpha_i-x,\quad k_1a_i(k_1)+k_2a_i(k_2)=\gamma_i-Nx,
	\end{equation}
	which yields
	\begin{equation}\label{}
	a_i(k_1)=\frac{k_2(\alpha_i-x)-(\gamma_i-Nx)}{k_2-k_1}\ge 0,\quad a_i(k_2)=\frac{(\gamma_i-Nx)-k_1(\alpha_i-x)}{k_2-k_1}\ge 0.
	\end{equation}
	Using the condition $1\le k_1<k_2\le N-1$, the solution exists when
	\begin{equation}\label{existence_condition}
	[\gamma_i-(N-1)\alpha_i]^+\le x\le \frac{\beta_i}{N-1},
	\end{equation}
	and $k_2$ and $k_1$ satisfy $k_1\le\frac{\gamma_i-Nx}{\alpha_i-x}\le k_2$.
	
	Next, we prove that $\frac{a_i(k_1)}{k_1}+\frac{a_i(k_2)}{k_2}$ is non-decreasing with $k_2$ and non-increasing with $k_1$. Let $k:=\frac{\gamma_i-Nx}{\alpha_i-x}$, then
	\begin{equation}\label{}
	a_i(k_1)=\frac{k_2(\alpha_i-x)-(\gamma_i-Nx)}{k_2-k_1}
	=(\alpha_i-x)\frac{k_2-\frac{\gamma_i-Nx}{\alpha_i-x}}{k_2-k_1}=(\alpha_i-x)\frac{k_2-k}{k_2-k_1}.
	\end{equation}
	Similarly, we have
	\begin{equation}\label{}
	a_i(k_2)=\frac{(\gamma_i-Nx)-k_1(\alpha_i-x)}{k_2-k_1}=(\alpha_i-x)\frac{k-k_1}{k_2-k_1}.
	\end{equation}
	Since $\alpha_i-x$ is a constant here and $k_1\le k\le k_2$, we only need to consider
	\begin{equation}\label{}
	\begin{split}
	\frac{1}{\alpha_i-x}\left[\frac{a_i(k_1)}{k_1}+\frac{a_i(k_2)}{k_2}\right]&=\frac{1}{k_1}\frac{k_2-k}{k_2-k_1}+\frac{1}{k_2}\frac{k-k_1}{k_2-k_1}\\
	&=\frac{1}{k_2-k_1}\left[\left(\frac{k_2}{k_1}-\frac{k}{k_1}\right)+\left(\frac{k}{k_2}-\frac{k_1}{k_2}\right)\right]\\
	&=\frac{1}{k_2-k_1}\left(\frac{k_2^2-k_1^2}{k_1k_2}+\frac{k_1-k_2}{k_1k_2}k\right)\\
	&=\frac{1}{k_1}+\frac{1}{k_2}-\frac{k}{k_1k_2}
	\end{split}
	\end{equation}
	The derivatives w.r.t. $k_1$ and $k_2$ can be obtained as follows:
	\begin{equation}\label{}
	\frac{1}{k_1^2}\left(\frac{k}{k_2}-1\right)\le 0, \quad \frac{1}{k_2^2}\left(\frac{k}{k_1}-1\right)\ge 0.
	\end{equation}
	Therefore, we have shown $\frac{a_i(k_1)}{k_1}+\frac{a_i(k_2)}{k_2}$ is non-increasing with $k_1$ and non-decreasing with $k_2$. As a result, the optimal $k_2$ and $k_1$ when $\frac{\gamma_i-Nx}{\alpha_i-x}$ is not an integer must be
	\begin{equation}\label{expression_of_k1_k2}
	k_1=\left\lfloor\frac{\gamma_i-Nx}{\alpha_i-x}\right\rfloor,\quad k_2=k_1+1.
	\end{equation}
	
	When $\frac{\gamma_i-Nx}{\alpha_i-x}$ is an integer, one can choose either $k_1=\frac{\gamma_i-Nx}{\alpha_i-x}, k_2=k_1+1$ or $k_1=\frac{\gamma_i-Nx}{\alpha_i-x}-1, k_2=k_1+1$, since for both cases the values of $\frac{a_i(k_1)}{k_1}+\frac{a_i(k_2)}{k_2}$ are indeed identical. Note that the condition for the existence of the solution to (\ref{LP_analytical}) implies that $1\le k_1<k_2\le N-1$, thus, the optimal $k_1$ and $k_2$ that give the largest feasible set of $x$ are
	\begin{equation}\label{optimal_k1_k2}
	k_1=\chi(\frac{\gamma_i-Nx}{\alpha_i-x}),\quad k_2=k_1+1.
	\end{equation}
	Then the solution of (\ref{separate_problem}) which is a function of $x$ can be written as
	\begin{equation}\label{def_f_i_x}
	\begin{split}
	f_i(x)=&\frac{2\chi(\frac{\gamma_i-Nx}{\alpha_i-x})+1}{\chi(\frac{\gamma_i-Nx}{\alpha_i-x})\left[\chi(\frac{\gamma_i-Nx}{\alpha_i-x})+1\right]}(\alpha_i-x)\\
	&-\frac{1}{\chi(\frac{\gamma_i-Nx}{\alpha_i-x})\left[\chi(\frac{\gamma_i-Nx}{\alpha_i-x})+1\right]}(\gamma_i-Nx)+\frac{x}{N},
	\end{split}
	\end{equation}
	where $ [\gamma_i-(N-1)\alpha_i]^+\le x\le \frac{\beta_i}{N-1}$.

	Next, we prove that $f_i(x)$ is a non-decreasing function of $x$. First, we prove that the function $f_i(x)$ is continuous. Note that by definition of $\gamma_i$ and $\alpha_i$, we know $\gamma_i\le N\alpha_i$.
	\begin{equation}\label{}
	\begin{split}
	\left(\frac{\gamma_i-Nx}{\alpha_i-x}\right)'&=\frac{(-N)(\alpha_i-x)-(\gamma_i-Nx)(-1)}{(\alpha_i-x)^2}\\
	&=\frac{(\gamma_i-Nx)-N(\alpha_i-x)}{(\alpha_i-x)^2}\\
	&\le\frac{(N\alpha_i-Nx)-N(\alpha_i-x)}{(\alpha_i-x)^2}=0.
	\end{split}
	\end{equation}
	Clearly, if $\gamma_i<N\alpha_i$, the function $\frac{\gamma_i-Nx}{\alpha_i-x}$ is a strictly decreasing function of $x$. When $\frac{\gamma_i-Nx}{\alpha_i-x}$ is an integer, say $\frac{\gamma_i-Nx}{\alpha_i-x}=n\le N-1$, choose $h>0$ satisfies $n-1<\frac{\gamma_i-N(x+h)}{\alpha_i-(x+h)}<n$ and $n<\frac{\gamma_i-N(x-h)}{\alpha_i-(x-h)}< n+1$. Then we have $\chi(\frac{\gamma_i-Nx}{\alpha_i-x})=n-1$, $\chi(\frac{\gamma_i-N(x+h)}{\alpha_i-(x+h)})=n-1$ and $\chi(\frac{\gamma_i-N(x-h)}{\alpha_i-(x-h)})=n$. Then one can verify $f_i(x+h)-f_i(x)=\left(\frac{1}{n-1}-\frac{1}{N}\right)\frac{N-n}{n}h>0$ and
	$f_i(x)-f_i(x-h)=\left(\frac{1}{n+1}-\frac{1}{N}\right)\frac{N-n}{n}h\ge 0$. Both $f_i(x+h)-f_i(x)$ and $f_i(x)-f_i(x-h)$ tend to zero when $h\rightarrow 0$. Thus, the function $f_i(x)$ is continuous when $\frac{\gamma_i-Nx}{\alpha_i-x}$ is an integer.
	
	When $\frac{\gamma_i-Nx}{\alpha_i-x}$ is not an integer, $\chi(\frac{\gamma_i-Nx}{\alpha_i-x})\le N-1$ and the function $f_i(x)$ is continuous and differentiable. The derivative of $f_i(x)$ satisfies
	\begin{equation}\label{differential_f_of_x}
	\begin{split}
	f_i'(x)&=\frac{1}{N}-\frac{1}{\chi(\frac{\gamma_i-Nx}{\alpha_i-x})}-\frac{1}{\chi(\frac{\gamma_i-Nx}{\alpha_i-x})+1}+\frac{N}{\chi(\frac{\gamma_i-Nx}{\alpha_i-x})\left[\chi(\frac{\gamma_i-Nx}{\alpha_i-x})+1\right]}\\
	&=\frac{\left[N-\chi(\frac{\gamma_i-Nx}{\alpha_i-x})\right]\left[N-\chi(\frac{\gamma_i-Nx}{\alpha_i-x})-1\right]}{N\chi(\frac{\gamma_i-Nx}{\alpha_i-x})\left[\chi(\frac{\gamma_i-Nx}{\alpha_i-x})+1\right]}\ge 0,
	\end{split}
	\end{equation}
	which means that $f_i(x)$ is a non-decreasing function of $x$. Then we can finally solve the problem (\ref{LP_analytical}) to get the new lower bound
	\begin{equation}\label{tempLabel}
	\lYAT=\min_{x}\left[\sum_{i=1}^N f_i(x)\right]\quad\st\quad \left\{\max_{i}[\gamma_i-(N-1)\alpha_i]\right\}^+\le x\le \min_{i}\frac{\beta_i}{N-1},
	\end{equation}
	where $\lYAT$ denotes the new analytical bound. Since $\sum_{i=1}^N f_i(x)$ is non-decreasing in $x$, defining $\delta=\left\{\max_{i}[\gamma_i-(N-1)\alpha_i]\right\}^+$, the objective value is thus obtained at $x=\delta$ so that $\lYAT=\sum_{i=1}^N f_i(\delta)$.
\end{proof}

\medskip

\section{Comparison of the new analytical bound with the KAT Bound}
We first note by comparing the LP problems of (\ref{LP_KAT}) and (\ref{LP_analytical}) that the new analytical bound is at least as good as the KAT bound. This is because the feasible set of (\ref{LP_KAT}) contains the feasible set of (\ref{LP_analytical}), and both problems (\ref{LP_KAT}) and (\ref{LP_analytical}) share the same objective function. Furthermore, setting $\delta=0$ directly yields  $\lYAT=\lKAT$.

We next quantify the smallest possible improvement of $\lYAT$ over $\lKAT$  via a lower bound on $\lYAT-\lKAT$. We also provide upper and lower bounds on $\lYAT$ in terms of quantities related to de Caen's bound \cite{DeCaen1997}. 
\begin{lemma}\label{lemma_gap}
	A lower bound on $\lYAT-\lKAT$ is given as follows:
	\begin{equation}\label{YA_KAT_gap}
	\lYAT-\lKAT\ge \left\{\sum_{i=1}^{N}\frac{\left[N-\chi(\frac{\gamma_i}{\alpha_i})\right]\left[N-\chi(\frac{\gamma_i}{\alpha_i})-1\right]}{\chi(\frac{\gamma_i}{\alpha_i})\left[\chi(\frac{\gamma_i}{\alpha_i})+1\right]}\right\}\frac{\delta}{N},
	\end{equation}
	where strict inequality for the lower bound (\ref{YA_KAT_gap}) holds if and only if there exists $0<\delta'<\delta$ such that $\frac{\gamma_i-N\delta'}{\alpha_i-\delta'}$ is an integer for some $i\in\{1,\ldots,N\}$.
		
	Furthermore, $\lYAT$ can be bounded as follows:
	\begin{equation}\label{YAT_new_gap}
	\delta+\sum_{i=1}^N\frac{(\alpha_i-\delta)^2}{\gamma_i-N\delta}\le \lYAT\le  \delta+\frac{9}{8}\sum_{i=1}^N\frac{(\alpha_i-\delta)^2}{\gamma_i-N\delta}
	\end{equation}
	where strict inequality for the upper bound holds if and only if $ \frac{\gamma_i-N\delta}{\alpha_i-\delta}\neq\frac{3}{2}$ for some $i$, and where strict inequality for the lower bound holds if and only if $ \frac{\gamma_i-N\delta}{\alpha_i-\delta}$ is not an integer for some $i$.
\end{lemma}

\smallskip

\begin{proof}
	\emph{Lower bound in (\ref{YA_KAT_gap}):} We first prove that $f_i(x)$ is convex in $x$. Note that $f_i(x)$ is a continuous and piecewise differentiable function. However, it is not differentiable when $\frac{\gamma_i-Nx}{\alpha_i-x}$ is an integer. In each interval of $x$ where $\frac{\gamma_i-Nx}{\alpha_i-x}$ is between two successive integers, the derivative of $f_i(x)$ is given by (\ref{differential_f_of_x}) which is positive and only a function of $\chi(\frac{\gamma_i-Nx}{\alpha_i-x})$. Since $\chi(\frac{\gamma_i-Nx}{\alpha_i-x})$ is an integer that does not change in each interval where $\frac{\gamma_i-Nx}{\alpha_i-x}$ is between two successive integers, we only need to show that the derivative of $f_i(x)$ given by (\ref{differential_f_of_x}) is a non-decreasing function of $x$. By denoting $n(x):=\chi(\frac{\gamma_i-Nx}{\alpha_i-x})$, we can write $f_i'(x)=g_i(n)$ where
	\begin{equation}\label{}
	g_i(n):=\frac{(N-n)(N-n-1)}{N(n+1)n}.
	\end{equation}
	Noting that $\gamma_i\le N\alpha_i$, one can verify that $\frac{\gamma_i-Nx}{\alpha_i-x}$ decreases with $x$ and by the definition of $\chi(\cdot)$, $n\le N-1$ and $n=\chi(\frac{\gamma_i-Nx}{\alpha_i-x})$ is a non-increasing function of $x$. Thus, we have $g_i(n)> 0$ and $g_i(n)$ is a decreasing function of $n$ for $1<n\le N-1$, since
	\begin{equation}\label{temp_used_for_equality}
	g_i(n)-g_i(n-1)=\frac{(N-n)(N-n-1)}{N(n+1)n}-\frac{(N-n+1)(N-n)}{Nn(n-1)}< 0,
	\end{equation}
	which implies $f_i'(x)$ is a non-decreasing function of $x$.  Therefore $f_i(x)$ is a convex function of $x$. Finally, by the property of a convex function, we have
	\begin{equation}\label{f_of_i_gap}
	f_i(x)-f_i(0)\ge f_i'(0)(x-0).
	\end{equation}
	Since $\lYAT-\lKAT=\sum_{i} \left[f_i(\delta)-f_i(0)\right]$, by substituting $x=\delta$ into (\ref{f_of_i_gap}) and summing over $i$, the first inequality of (\ref{YA_KAT_gap}) is obtained.
	
	Note that if for all $i=1,\ldots,N$ there does not exist $0<\delta'<\delta$ such that $\frac{\gamma_i-N\delta'}{\alpha_i-\delta'}$ is an integer, the derivative $f_i'(x)=f_i'(0)$ for all $0< x<\delta$ and $i=1,\ldots,N$. Then equality in (\ref{f_of_i_gap}) holds for all $i$, and the first equality holds in (\ref{YA_KAT_gap}). If there exists $0<\delta'<\delta$ such that $\frac{\gamma_i-N\delta'}{\alpha_i-\delta'}$ is an integer for some $i$, then according to  (\ref{temp_used_for_equality}) and the definition of $\chi(\cdot)$, we have $f_i'(\delta')>f_i'(0)$.
	Then, it can be shown that for those $i$ the strict inequality in (\ref{f_of_i_gap}) holds when $x=\delta$. This is because
	\begin{equation}\label{}
	\begin{split}
	f_i(\delta)-f_i(0)&=[f_i(\delta)-f_i(\delta')]+[f_i(\delta')-f_i(0)]\\
	&\ge f_i'(\delta')(\delta-\delta')+f_i'(0)(\delta'-0)\\
	&>f_i'(0)(\delta-\delta')+f_i'(0)(\delta'-0)\\
	&=f_i'(0)(\delta-0).
	\end{split}
	\end{equation}
	Therefore, the first strict inequality in (\ref{YA_KAT_gap}) holds.

\smallskip
	
	\emph{Bounds in (\ref{YAT_new_gap}):}
	 It suffices to show that for any given $i\in\{1,2,\dots,N\}$, and integer $k\in\{1,2,\dots,N-1\}$, we always have
	 \begin{equation}
	 \frac{(\alpha_i-\delta)^2}{\gamma_i-N\delta}\le \frac{a_i(k)}{k}+	\frac{a_i(k+1)}{k+1}\le \frac{9}{8}\frac{(\alpha_i-\delta)^2}{\gamma_i-N\delta},
	 \end{equation}
	 where $a_i(k)+a_i(k+1)=\alpha_i-\delta$ and $k a_i(k)+(k+1)a_i(k+1)=\gamma_i-N\delta$. The lower bound can be obtained directly by the Cauchy Schwarz inequality
	\[
	\frac{a_i(k)}{k}+	\frac{a_i(k+1)}{k+1}\ge\frac{ \left(a_i(k)+a_i(k+1)\right)^2}{k a_i(k)+(k+1)a_i(k+1)}=\frac{(\alpha_i-\delta)^2}{\gamma_i-N\delta},
	\]
	where the inequality is tight if and only if either $a_i(k)$ or $a_i(k+1)$ is zero;  i.e., if and only if $ \frac{\gamma_i-N\delta}{\alpha_i-\delta}$ is an integer for all $i$.
	
	The upper bound can be shown as follows:
	\begin{equation}
	\begin{split}
	\frac{\left[\frac{a_i(k)}{k}+	\frac{a_i(k+1)}{k+1}\right](\gamma_i-N\delta)}{(\alpha_i-\delta)^2}&=\frac{\left[\frac{a_i(k)}{k}+	\frac{a_i(k+1)}{k+1}\right]\left[k a_i(k)+(k+1) a_i(k+1)\right]}{(\alpha_i-\delta)^2}\\
	&=\frac{a_i(k)^2+a_i(k+1)^2+(\frac{k}{k+1}+\frac{k+1}{k})a_i(k)a_i(k+1)}{[a_i(k)+a_i(k+1)]^2}\\
	&=1+\frac{1}{k(k+1)}\frac{a_i(k)a_i(k+1)}{[a_i(k)+a_i(k+1)]^2}\\
	&\le 1+\frac{1}{4}\frac{1}{k(k+1)}\le 1+\frac{1}{8}=\frac{9}{8},
	\end{split}
	\end{equation}
	where the first inequality is tight if and only if $a_i(k)=a_i(k+1)=\frac{\alpha_i-\delta}{2}$ and the second inequality is tight if and only if $k=1$; these are equivalent to $\frac{\gamma_i-N\delta}{\alpha_i-\delta}=\frac{3}{2}$ for all $i$.
\end{proof}

\medskip

\begin{remark}
	Note that when $\delta=0$, $\lYAT=\lKAT$ and (\ref{YAT_new_gap}) in Lemma~\ref{lemma_gap} reduces~to 
$$\sum_{i=1}^N\frac{\alpha_i^2}{\gamma_i}\le \lKAT\le \frac{9}{8}\sum_{i=1}^N\frac{\alpha_i^2}{\gamma_i},$$ 
where $\sum_{i=1}^N\frac{\alpha_i^2}{\gamma_i}$ is just de Caen's bound \cite{DeCaen1997}. In other words, the previously known results that the KAT bound is sharper than de Caen's bound \cite{Kuai2000} and that the KAT bound improves de Caen's bound by a factor of at most $\frac{9}{8}$ \cite{Dembo2000} are recovered.
\end{remark}

\medskip

\section{Numerical Examples}
\begin{table}\caption{System VI.}\label{table_sysVI}
	\centering
	\begin{tabular}{|c|c||c|c|c|c|}
		\hline
		Outcomes $\omega_i$ & $p(\omega_i)$ & $A_1$ & $A_2$ & $A_3$ & $A_4$\\ \hline\hline
		$\omega_0$ & 0.0962  &  $\times$   &      &    &  $\times$ \\ \hline
		$\omega_1$ & 0.0446  &    &     &     &  $\times$\\ \hline
		$\omega_2$ & 0.0581    &    &  $\times$  &     &  $\times$\\ \hline
		$\omega_3$ & 0.0225    & $\times$  &  $\times$   &  $\times$  &   $\times$\\ \hline
		$\omega_4$ & 0.0385    & $\times$ &     &     &   \\ \hline
		$\omega_5$ & 0.0071    & $\times$ &      &  $\times$  &   $\times$\\ \hline
		$\omega_6$ & 0.0582     &    &  $\times$   &      &\\ \hline
	\end{tabular}
\end{table}
\begin{table}\caption{System VII.}\label{table_sysVII}
	\centering
	\begin{tabular}{|c|c||c|c|c|c|}
		\hline
		Outcomes $\omega_i$ & $p(\omega_i)$ & $A_1$ & $A_2$ & $A_3$ & $A_4$\\ \hline\hline
		$\omega_0$ & 0.1832   &  $\times$   &      &    &   \\ \hline
		$\omega_1$ & 0.1219   &    &     &   $\times$  &  \\ \hline
		$\omega_2$ & 0.0337   & $\times$  &  $\times$   &  $\times$   &  $\times$\\ \hline
		$\omega_3$ & 0.0256    &  &  $\times$   &  $\times$  &   \\ \hline
		$\omega_4$ & 0.0682      &    &     &    &   $\times$\\ \hline
		$\omega_5$ &  0.0389    &     &   $\times$ &   $\times$   &  $\times$\\ \hline
		$\omega_6$ & 0.0631     &    &     &   $\times$   & $\times$\\ \hline
	\end{tabular}
\end{table}
\begin{table}\caption{System VIII.}\label{table_sysVIII}
	\centering
	\begin{tabular}{|c|c||c|c|c|c|}
		\hline
		Outcomes $\omega_i$ & $p(\omega_i)$ & $A_1$ & $A_2$ & $A_3$ & $A_4$\\ \hline\hline
		$\omega_0$ & 0.0330       &     &      &    &  $\times$ \\ \hline
		$\omega_1$ & 0.0705   &  $\times$  & $\times$    &   $\times$  &  \\ \hline
		$\omega_2$ & 0.0876     & $\times$  &    &  $\times$   &  $\times$\\ \hline
		$\omega_3$ & 0.0608        &  $\times$  & $\times$  &     & $\times$ \\ \hline
		$\omega_4$ & 0.0865        & $\times$  &  $\times$   &  $\times$  & $\times$   \\ \hline
		$\omega_5$ & 0.0621      &  &   $\times$   &  $\times$   & $\times$   \\ \hline
		$\omega_6$ & 0.0181       &    &  $\times$    &    &   \\ \hline
		$\omega_7$ &  0.0898       &   $\times$   &    &   $\times$   & \\ \hline
		$\omega_8$ & 0.0770       &  &  $\times$     &  $\times$ & \\ \hline
	\end{tabular}
\end{table}
In this section, we evaluate the new lower bounds using eight numerical examples. The first four examples are the same as in \cite{Kuai2000}. The last four examples, Systems V to VIII, are new and are shown in Table \ref{table_sysV}-\ref{table_sysVIII}, respectively. As a reference, the existing DS bound \cite{Dawson1967}, de Caen's bound \cite{DeCaen1997}, the KAT bound (\ref{KAT_bound}) are included for comparison. Furthermore, the GK bound \cite{Gallot1966,Kounias1968}, which exploits full information of all $P(A_i\cap A_j)$'s and $P(A_i)$'s, and the PG bound \cite{Prekopa2005} which exploits $\{P(A_i)\}$, $\{\sum_j P(A_i\cap A_j)\}$ and $\{\sum_{j,l}P(A_i\cap A_j\cap A_l)\}$, are also compared with the new bounds.
The results are shown in Table \ref{table_examples}. The gap of $\lYAT-\lKAT$ and the derived lower bound (\ref{YA_KAT_gap}) are shown in Table \ref{table_gap}.
\begin{table}\caption{Comparison of Lower Bounds (* indicates the bound uses less information, and ** indicates the bound uses more information). }\label{table_examples}
	\small\small
	\centering
	\begin{tabular}{|c||c|c|c|c|c|c|c|c|}
		\hline
		System & $P\left(\bigcup_{i=1}^N A_i\right)$ & DS* & de Caen & KAT & GK** & PG** & Bound (\ref{YA_bound}) & Bound (\ref{LP_optimal})\\ \hline\hline
		I &  0.7890  &  0.7007  &  0.7087  &  0.7247  &  0.7601  &  0.7443 &0.7247  &  0.7487\\ \hline
		II &  0.6740  &  0.6150  &  0.6154  &  0.6227  &  0.6510  &  0.6434 &0.6227  &  0.6398\\ \hline
		III &  0.7890  &  0.6933  &  0.7048  &  0.7222  &  0.7508  & 0.7556 &0.7222  &  0.7427\\ \hline
		IV &  0.9687  &  0.8879  &  0.8757  &  0.8909  &  0.9231  & 0.9148 &0.8909  &  0.9044\\  \hline
		V &  0.3900  &  0.3800  &  0.3495 &   0.3833  &  0.3813 & 0.3900  &0.3900  &  0.3900\\ \hline
		VI & 0.3252   & 0.2706  &  0.2720   & 0.2769   & 0.2972  & 0.3240 &0.3205   & 0.3252\\ \hline
		VII & 0.5346   & 0.3989  &  0.4186  &  0.4434  &  0.4750  & 0.5281 &0.4562   & 0.5090\\ \hline
		VIII & 0.5854  &  0.5395 &   0.5352   & 0.5412  &  0.5390  & 0.5726 &0.5464 &   0.5513\\ \hline
	\end{tabular}
\end{table}
\begin{table}[h!]\caption{Comparison of New Bound (\ref{YA_bound}) with KAT Bound.}\label{table_gap}
	\centering
	\begin{tabular}{|c||c|c|c|c|}
		\hline
		System & KAT & Bound (\ref{YA_bound}) & $\lYAT-\lKAT$ & Lower bound on the gap (\ref{YA_KAT_gap})\\ \hline\hline
		V &  0.3833  &  0.3900  &  0.0067 & 0.0067\\ \hline
		VI & 0.2769   & 0.3205  &   0.0436   &  0.0206\\ \hline
		VII & 0.4434   &  0.4562 &   0.0128  &   0.0128\\ \hline
		VIII &  0.5412  &  0.5464 &     0.0051    &  0.0051\\ \hline
	\end{tabular}
\end{table}

One can see that the KAT bound is at least as good as the DS and de Caen's bounds as already shown in \cite{Kuai2000}. The new bounds are at least as good as the KAT bound in all the examples, as expected. More specifically, the new numerical bound (\ref{LP_optimal}) is sharper than the KAT bound in all examples, and the new analytical bound (\ref{YA_bound}) is sharper than the KAT bound for Systems V to VIII and identical to the KAT bound for Systems I to VI. Concerning the gap of new analytical bound and the KAT bound, the equality of (\ref{YA_KAT_gap}) holds for Systems V, VII and VIII.

Moreover, from the numerical examples, we note that the GK bound \cite{Gallot1966,Kounias1968}, which requires more information (all the information of individual $P(A_i\cap A_j)$ as well as $P(A_i)$'s), is not guaranteed to be sharper than the KAT bound\footnote{Another example in which the GK bound is looser than the KAT bound is given in \cite{Feng2013}.} and the new bounds. For example, the GK bound is worse than the KAT bound as well as the new bounds in Systems V and VIII. It is better than the KAT bound but worse than the new bounds in System VI, better than the KAT bound and the new analytical bound but worse than the new numerical bound in System VII. Furthermore, we note that the PG bound \cite{Prekopa2005}, which also requires more information ($\{P(A_i)\}$, $\{\sum_j P(A_i\cap A_j)\}$ and $\{\sum_{j,l}P(A_i\cap A_j\cap A_l)\}$), is also not necessarily sharper than the new bounds. For example, the PG bound is worse than the new numerical bound in System I and VI.

Finally, we note that all lower bounds considered in this paper can be sharpened algorithmically by optimizing over subsets (e.g., see \cite{Hoppe2006,Behnamfar2007,Hoppe2009}).

\section{Concluding Remarks}
We considered lower bounds on the probability of a finite union of events in terms of the individual event probabilities $\{P(A_i), i=1,\ldots,N\}$ and the sums of the pairwise event probabilities, \emph{i.e.}, $\{\sum_{j:j\neq i} P(A_i\cap A_j), i=1,\ldots,N\}$. An optimal numerical lower bound is obtained by solving an LP problem with $N^2-N+1$ variables, and a new analytical lower bound is established based on solving a relaxed LP problem. It is shown that the new analytical bound is at least as good as the KAT bound.
We conclude with the following remarks: 
\begin{itemize} 
	\item An \emph{optimal numerical upper bound} can be obtained by maximizing the objective function in~(\ref{LP_optimal}), instead of minimizing it, under the same constraints of (\ref{LP_optimal}) and the additional constraint $\sum_{i=1}^N\sum_{k=1}^N\frac{a_i(k)}{k}\le 1$.
	\item 	In the proof of achievability of Theorem \ref{theorem1}, only the last two constraints of (\ref{LP_optimal}) with the additional constraint $\sum_{i=1}^N\sum_{k=1}^N\frac{a_i(k)}{k}\le 1$ are required. Therefore, in other cases where different information is available, optimal lower/upper bounds can be obtained using a similar technique as in  Theorem~\ref{theorem1}.
	\item Finally, we can show that the LP problem (\ref{LP_analytical}) has a unique optimal feasible point.  Therefore, the new analytical bound is achievable if and only if the optimal feasible point of the LP problem (\ref{LP_analytical}) has a corresponding family of events $\{A_i, i=1,\cdots,N\}$ that satisfies the information represented by $\theta=(\alpha_1,\cdots,\alpha_N,\gamma_1,\cdots,\gamma_N)$.
\end{itemize}

\bibliographystyle{siam}
\bibliography{Union_Bounds}



\end{document}